\documentclass[a4paper]{article}

\usepackage[utf8]{inputenc}
\usepackage{lmodern}
\usepackage{array}
\usepackage{graphicx}
\usepackage{amssymb, amsfonts, amsmath, amsthm}
\usepackage[english]{babel}
\usepackage[pdfborder={0 0 0}]{hyperref}
\usepackage{tikz}
\usetikzlibrary{decorations}
\usetikzlibrary{decorations.pathreplacing}
\tikzset{individu/.style={draw,thick}}

\usepackage{subfigure}

\theoremstyle{plain}
\newtheorem{theorem}{Theorem}[section]

\newtheorem{lemma}[theorem]{Lemma}
\newtheorem{proposition}[theorem]{Proposition}

\theoremstyle{definition}

\theoremstyle{remark}
\newtheorem{remark}[theorem]{Remark}

\newcommand{\N}{\mathbb{N}}
\newcommand{\Z}{\mathbb{Z}}
\newcommand{\R}{\mathbb{R}}

\newcommand{\ind}[1]{\mathbf{1}_{\left\{#1\right\}}}

\newcommand{\floor}[1]{{\left\lfloor #1 \right\rfloor}}
\newcommand{\ceil}[1]{{\left\lceil #1 \right\rceil}}

\renewcommand{\rho}{\varrho}

\numberwithin{equation}{section}

\DeclareMathOperator{\E}{\mathbb{E}}

\renewcommand{\P}{\mathbb{P}}

\newcommand{\calF}{\mathcal{F}}

\newcommand{\calL}{\mathcal{L}}

\newcommand{\calG}{\mathcal{G}}

\renewcommand{\bar}[1]{\overline{#1}}

\title{Necessary and sufficient conditions for the convergence of the consistent maximal displacement of the branching random walk}
\author{Bastien Mallein\footnote{LAGA, Université Paris 13 (Villetaneuse).}}
\date{\today}

\newcommand{\T}{\mathbf{T}}
\renewcommand{\tilde}[1]{\widetilde{#1}}
\renewcommand{\hat}[1]{\widehat{#1}}

\begin{document}

\maketitle
\begin{abstract}
Consider a supercritical branching random walk on the real line. The consistent maximal displacement is the smallest of the distances between the trajectories followed by individuals at the $n$th generation and the boundary of the process. Fang and Zeitouni, and Faraud, Hu and Shi proved that under some integrability conditions, the consistent maximal displacement grows almost surely at rate $\lambda^* n^{1/3}$ for some explicit constant~$\lambda^*$. We obtain here a necessary and sufficient condition for this asymptotic behaviour to hold.
\end{abstract}

\section{Introduction}
\label{sec:introduction}

A branching random walk on $\R$ is a process defined as follows. It starts with one individual located at $0$ at time $0$. Its children are positioned in $\R$ according to a point process of law $\calL$, and form the first generation of the process. Then for any $n \in \N$, each individual in the $n$-th generation makes children around its current position according to an independent point process with law $\calL$. We write $\T$ for the genealogical tree of the population. For any $u \in \T$ we denote by $V(u)$ the position of the individual $u$ and by $|u|$ the generation to which $u$ belongs. The random marked tree $(\T,V)$ is the branching random walk with reproduction law $\calL$. We assume the Galton-Watson tree $\T$ is supercritical, i.e.
\begin{equation}
  \label{eqn:supercritical}
  \E\left( \sum_{|u|=1} 1 \right) > 1,
\end{equation}
and we write $S = \{\# \T = \infty\}$ for the survival event, which happens with positive probability under assumption \eqref{eqn:supercritical}. We also assume the branching random walk $(\T,V)$ is in the boundary case (in the sense of \cite{BiK05}):
\begin{equation}
  \label{eqn:boundary}
  \E\left( \sum_{|u|=1} e^{-V(u)} \right) = 1 \quad \mathrm{and} \quad \E\left( \sum_{|u|=1} V(u) e^{-V(u)} \right) = 0.
\end{equation}
Under these assumptions, Biggins \cite{Big76} proved that $\frac{1}{n}\min_{|u|=n} V(u)$ converges to $0$ almost surely on $S$. Any branching random walk with mild integrability assumption can be normalized to be in the boundary case, see e.g. Bérard and Gouéré \cite{BeG11}. We also assume that
\begin{equation}
  \label{eqn:variance}
  \sigma^2 := \E\left( \sum_{|u|=1} V(u)^2 e^{-V(u)} \right) < \infty.
\end{equation}

Let $n \in \N$. For any $u \in \T$ such that $|u|=n$ and $k \leq n$ we denote by $u_k$ the ancestor of $u$ alive at generation $k$. The consistent maximal displacement of the branching random walk is the quantity defined as
\begin{equation*}
  L_n := \min_{|u|=n} \max_{k \leq n} V(u_k).
\end{equation*}
It corresponds loosely to the maximal distance between the lower boundary of the branching random walk and the trajectory of the individual that stayed as close as possible to it. The asymptotic behaviour of $L_n$ has been studied by Fang and Zeitouni \cite{FaZ10} and by Fauraud, Hu and Shi \cite{FHS12}. Under stronger integrability assumptions, they proved that $L_n$ behaves as $\lambda^* n^{1/3}$ almost surely for some explicit $\lambda^* > 0$. The main result of this article is a necessary and sufficient condition for this asymptotic behaviour to hold. Roberts \cite{Rob14} computed the second order of the asymptotic behaviour of $L_n$ for a similar model, the branching Brownian motion.

This $O(n^{1/3})$ asymptotic behaviour for the consistent maximal displacement is non-obvious, as it is different of the asymptotic behaviour of the minimum of the branching random walk, $M_n := \min_{|u|=n} V(u)$. Indeed, it was proved by Addario-Berry and Reed \cite{ABR09} and by Hu and Shi \cite{HuS09} that under some additional assumptions, the minimal displacement satisfies
\[
  \lim_{n \to \infty} \frac{M_n}{\log n} = \frac{3}{2} \quad \text{in probability.}
\]
Thus, the consistent maximal displacement grows much faster than the minimal displacement does. Note that it was proven in \cite{Che14b} that the trajectory yielding to the minimal position at time $n$, when rescaled in time by $n$ and in space by $n^{1/2}$, converges toward a Brownian excursion. Therefore, the maximal distance from 0 of this trajectory is of order $n^{1/2}$, much larger than what is expected for the consistent maximal displacement. As a result, one conclude that particles realizing the minimal displacement and particles realizing the consistent maximal displacement form distinct sets.

We introduce the integrability assumption
\begin{equation}
  \label{eqn:integrability}
  \lim_{x \to \infty} x^2 \E\left( \sum_{|u|=1} e^{-V(u)} \ind{\log \left(\sum_{|v|=1} e^{-V(v)}\right) \geq x} \right) = 0.
\end{equation}
Observe that this assumption is weaker than the classical integrability assumptions \cite[Equation (1.4)]{Aid13} for branching random walks. These stronger conditions are necessary and sufficient to obtain the regularity of the asymptotic behaviour of many quantities associated to the extremal particles in the branching random walk, such as the minimal displacement $M_n$ \cite{Aid13}, or the derivative martingale \cite{Che14}.
\begin{theorem}
\label{thm:main}
We assume that \eqref{eqn:supercritical}, \eqref{eqn:boundary} and \eqref{eqn:variance} hold. Then \eqref{eqn:integrability} is a necessary and sufficient condition for
\[\lim_{n \to \infty} \frac{L_n}{n^{1/3}} = \left( \frac{3\pi^2 \sigma^2}{2} \right)^{1/3} \quad \text{a.s. on $S$}.\]
\end{theorem}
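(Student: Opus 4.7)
The approach rests on the many-to-one lemma: by \eqref{eqn:boundary}, there is a centered random walk $(\tilde S_k)_{k\ge 0}$ of variance $\sigma^2$ such that, for any measurable $F\ge 0$,
\[
\E\!\left[\sum_{|u|=n} F(V(u_1),\ldots,V(u_n))\right] = \E\bigl[e^{\tilde S_n}F(\tilde S_1,\ldots,\tilde S_n)\bigr].
\]
This reduces path-counting in the branching random walk to expectations over $\tilde S$. The main analytic input is a Mogulskii-type small-ball estimate
\[
\P\bigl(\tilde S_{\lfloor nt\rfloor}/n^{1/3}\in[\varphi(t),\psi(t)],\ t\in[0,1]\bigr) = \exp\!\Bigl(-\tfrac{\pi^2\sigma^2}{2}n^{1/3}\!\int_0^1\!\tfrac{dt}{(\psi-\varphi)^2} + o(n^{1/3})\Bigr),
\]
from which the constant $\lambda^* = (3\pi^2\sigma^2/2)^{1/3}$ arises by solving a variational problem balancing the exponential reward $e^{\tilde S_n}$ against the Mogulskii cost of confining the walk to a narrow tube.

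For the upper bound $\limsup L_n/n^{1/3}\le\lambda^*$, I would fix $\lambda>\lambda^*$ and let $N_n$ count the $|u|=n$ whose trajectory $V(u_{\cdot})$ stays, up to a narrow window, inside the optimal tube of amplitude $\lambda n^{1/3}$ with a prescribed endpoint. Many-to-one and Mogulskii yield $\E[N_n]\ge c>0$; the second moment $\E[N_n^2]$ is bounded via the standard decomposition over the most recent common ancestor of two particles, combined with the BRW spine decomposition. The weaker hypothesis \eqref{eqn:integrability} enters here: it permits truncating the offspring point processes at a threshold of order $e^{cn^{1/3}}$ and controls the contribution of atypical generations along the spine. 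Paley--Zygmund then gives $\P(N_n>0)\ge \eta>0$, and splitting $\T$ at a fixed large generation $k_0$ (using \eqref{eqn:supercritical}) together with Borel--Cantelli upgrades this to the a.s.\ upper bound on the survival event $S$.

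For the lower bound $\liminf L_n/n^{1/3}\ge\lambda^*$, a direct Markov bound is useless since $\E[\#\{|u|=n:\max_k V(u_k)\le \lambda n^{1/3}\}]\sim e^{\lambda n^{1/3}}$. In the spirit of \cite{FaZ10}, I would instead introduce a time-dependent barrier $\ell(k) = \lambda n^{1/3} - g(k)$, with $g$ chosen to match the optimal trajectory shape coming from the variational problem, and use an exponentially tilted branching random walk to deduce $\P(L_n\le \lambda n^{1/3})\le e^{-c(\lambda)n^{1/3}}$ with $c(\lambda)>0$ for $\lambda<\lambda^*$. Borel--Cantelli along a geometric subsequence $n_j$, combined with the monotonicity $L_n\le L_{n+1}$ (any trajectory at generation $n+1$ restricts to one at $n$ with at most the same maximum), interpolates this to the a.s.\ bound for all $n$.

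Necessity of \eqref{eqn:integrability} is obtained by contraposition: if it fails, there are $\varepsilon>0$ and $x_j\uparrow\infty$ with $x_j^2\,\E[\sum_{|u|=1}e^{-V(u)}\ind{\log\sum_{|v|=1}e^{-V(v)}\ge x_j}]\ge\varepsilon$. A Borel--Cantelli argument on the tree, using \eqref{eqn:supercritical} to locate exceptional vertices infinitely often, produces, with positive probability on $S$, infinitely many generations at which some vertex's offspring point process is atypical, creating a particle at exceptionally negative position; the lineage through this particle yields a trajectory whose maximum falls well below $\lambda^*n^{1/3}$, contradicting the stated limit. The main obstacle throughout is the second-moment estimate for the upper bound under the weaker \eqref{eqn:integrability} rather than the stronger assumption in \cite{Aid13}: the exponential-scale truncation and its error analysis via a refined size-biased spine decomposition carry most of the technical weight of the proof.
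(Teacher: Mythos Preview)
Your sufficiency argument is essentially the paper's: the lower bound on $L_n$ via a first-moment/barrier computation (Lemma~\ref{lem:lowerbound}) and the upper bound via a truncated second-moment method on the spine followed by Paley--Zygmund and a tree-splitting amplification (Lemmas~\ref{lem:upperbound} and~\ref{lem1}). Two minor remarks: the subsequence interpolation for the lower bound is unnecessary since $\sum_n e^{-c n^{1/3}}<\infty$ already gives Borel--Cantelli over all $n$; and for the upper bound one actually works with $\lambda<\lambda^*$ and shows $\P(L_n\le\lambda n^{1/3})\ge e^{(\lambda-\lambda^*-o(1))n^{1/3}}$, not $\E[N_n]\ge c$.

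The necessity argument, however, is conceptually backwards. You argue that when \eqref{eqn:integrability} fails, atypical reproduction events produce exceptionally negative particles whose lineages keep the running maximum small, forcing $L_n$ \emph{below} $\lambda^* n^{1/3}$ infinitely often. This cannot work: the lower bound $\liminf_n L_n/n^{1/3}\ge\lambda^*$ is established in Lemma~\ref{lem:lowerbound} using only \eqref{eqn:supercritical}, \eqref{eqn:boundary}, \eqref{eqn:variance}, with no appeal to \eqref{eqn:integrability}. What must be shown is the opposite: that failure of \eqref{eqn:integrability} makes $L_n$ \emph{larger} than $\lambda^* n^{1/3}$ along a subsequence. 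The paper's mechanism (Lemma~\ref{lem:onlyIf}) is that under the size-biased law $\hat{\P}$ the quantity $\xi(w_0)=\log\sum_{v\in\Omega(w_0)}e^{-V(v)}$ then has a heavy tail, namely $\limsup_n n^{2/3}\hat{\P}(\xi(w_0)\ge An^{1/3})>0$. This extra cost feeds into the enriched Mogul'ski\u\i{} estimate (Lemma~\ref{lem:spinalMogulskiiModified}) and strictly depresses the tube probability for the spine; combined with a decomposition that controls the contribution of paths leaving the tube or having large $\xi$, one gets $\liminf_n n^{-1/3}\log\P(L_n\le\lambda n^{1/3})<0$ for some $\lambda>\lambda^*$, hence $\limsup_n L_n/n^{1/3}\ge\lambda$ a.s. Your proposed Borel--Cantelli-on-the-tree argument does not access this effect and would, if it worked, contradict Lemma~\ref{lem:lowerbound}.
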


In the rest of the article, we denote by $\N$ the set of positive integers, $\Z_+$ the set of non-negative integers, $\R_+$ the set of non-negative real numbers. For any $x \in \R_+$, we write $[x] = [0,x] \cap \Z_+$ the set of non-negative integers that are smaller or equal to $x$.

The rest of the article is organised as follows. In Section \ref{sec:rw} we introduce the spinal decomposition of the branching random walk and the Mogul'ski\u\i{}'s small deviations estimate. These results are used to obtain a law of large numbers for $L_n$ in Section~\ref{sec:computation}, which is then used to obtain its almost sure asymptotic behaviour.

\section{Preliminary results}
\label{sec:rw}

In this section, we first introduce the spinal decomposition, that links additive moments of the branching random walk with random walk estimates. Using this result, to study the consistent maximal displacement, good estimates of the probability for random walks to stay in a small interval will be needed. We introduce them in a second time, by expanding Mogul'ski\u\i{} results \cite{Mog74} on small deviations for random walk trajectories.

\subsection{Spinal decomposition of the branching random walk}
\label{subsec:spinal}

For $n \in \Z_+$, we write $W_n = \sum_{|u|=n} e^{-V(u)}$ and $\calF_n = \sigma\left( V(u), |u| \in [n] \right)$. Under assumption \eqref{eqn:boundary}, $(W_n)$ is a non-negative $(\calF_n)$-martingale. We introduce the probability $\bar{\P}$ such that for any $n \in \Z_+$, $\bar{\P}_{|\calF_n} = W_n \cdot \P_{|\calF_n}$.

The spinal decomposition consists in an alternative description of $\bar{\P}$ as a branching random walk with a distinguished individual with a different reproduction law. It generalizes a similar construction for Galton-Watson processes, that can be found in \cite{LPP95}. This result has been proved by Lyons in \cite{Lyo97}. Let $\T$ be a tree, a spine of $\T$ is a sequence $w=(w_n) \in \T^{\Z_+}$ such that for any $n \in \Z_+$ and $k \in [n]$ we have $|w_n|=n$ and $(w_n)_k = w_k$. We write $\hat{\calL}$ for the law of the point process $(V(u), |u|=1)$ under the law $\bar{\P}$.

We now define the law $\hat{\P}$ of a branching random walk with spine $(\T,V,w)$. It starts with a unique individual $w_0$ located at $0$ at time $0$. Its children are positioned according to a point process of law $\hat{\calL}$. The individual $w_1$ is then chosen at random among these children $u$ with probability proportional to $e^{-V(u)}$. Similarly at each generation $n$, every individual $u$ makes children independently, according to law $\calL$ if $u \neq w_n$, or $\hat{\calL}$ otherwise; then $w_{n+1}$ is chosen at random among the children $v$ of $w_n$ with probability proportional to~$e^{-V(v)}$.
\begin{proposition}[Spinal decomposition, Lyons \cite{Lyo97}]
\label{pro:spinaldecomposition}
Assuming \eqref{eqn:boundary} and \eqref{eqn:variance}, for any $n \in \Z_+$, we have $\hat{\P}_{|\calF_n} = \bar{\P}_{|\calF_n}$, for any $|u|=n$,
\[
  \hat{\P}\left( w_n = u \left| \calF_n \right. \right) = e^{-V(u)}/W_n,
\]
and $(V(w_n), n \in \Z_+)$ is a centred random walk with variance $\sigma^2$.
\end{proposition}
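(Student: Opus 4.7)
The plan is to prove the three assertions simultaneously by induction on $n$, taking as inductive hypothesis the pointwise identity
\[
  \hat{\E}\left[ F \ind{w_n = u} \right] = \E\left[ F e^{-V(u)} \right] \qquad \text{for every bounded } \calF_n\text{-measurable } F \text{ and every } |u|=n.
\]
Summing over $u$ gives $\hat{\P}_{|\calF_n} = W_n \P_{|\calF_n} = \bar{\P}_{|\calF_n}$, and reading the identity for fixed $u$ as a conditional expectation yields the formula for the conditional law of $w_n$. The base case $n=0$ is immediate. For the inductive step I take $|u| = n+1$ and denote by $u_n$ its parent at generation $n$; since $w_{n+1}$ is by construction a child of $w_n$, the event $\{w_{n+1}=u\}$ forces $w_n = u_n$, so only this choice of spine contributes to the sum.

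The core of the argument is then a single change-of-measure computation for the spinal reproduction. Under the construction of $\hat{\P}$, conditionally on $\calF_n$ and $w_n = u_n$, the children of every vertex at generation $n$ distinct from $u_n$ are produced as under $\P$, while the children of $u_n$ form a point process of law $\hat{\calL}$, and $w_{n+1}$ is then chosen among them with probability proportional to $e^{-V(v)}$. Since $\hat{\calL}$ is the $W_1$-biased version of $\calL$, for any non-negative functional $G$ of the children's relative displacements $(x_j)$,
\[
  \hat{\E}\left[ G \cdot \frac{e^{-x_v}}{\sum_j e^{-x_j}} \right] = \E\left[ G \cdot e^{-x_v} \right],
\]
i.e.\ the biasing factor $W_1$ in $\hat{\calL}$ cancels exactly the normalising factor in the conditional law of $w_{n+1}$. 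Plugging this into the conditional expectation and then applying the inductive hypothesis to the remaining $\calF_n$-measurable quantity $e^{V(u_n)}\E[Fe^{-V(u)}\mid\calF_n]$ gives
\[
  \hat{\E}\left[ F \ind{w_{n+1}=u} \right] = \E\left[ F e^{-V(u)} \right],
\]
which closes the induction.

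For the trajectory of the spine, the same cancellation identity applied with $G = f(x_v)$ shows that conditionally on $\calF_n$ and on $w_n$, the increment $V(w_{n+1}) - V(w_n)$ has law
\[
  \mu(f) = \E\left[ \sum_{|v|=1} f(V(v)) e^{-V(v)} \right].
\]
Hypothesis \eqref{eqn:boundary} says precisely that $\mu$ is a probability measure with mean $0$, and \eqref{eqn:variance} gives $\int x^2\, d\mu = \sigma^2$; independence of the successive increments follows from the fact that at each generation the children of $w_n$ are produced independently of the past. The main obstacle I expect is not conceptual but bookkeeping: one must keep careful track of the interplay between the size-biased choice of $w_{n+1}$ and the $W_1$-biased reproduction of $w_n$ so that the two $W_1$ factors cancel exactly, and one must be careful that in the inductive step the intermediate quantity $e^{V(u_n)}\E[Fe^{-V(u)}\mid\calF_n]$ is genuinely $\calF_n$-measurable before the inductive hypothesis is applied.
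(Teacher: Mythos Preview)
The paper does not prove this proposition: it is stated as a known result of Lyons \cite{Lyo97} and no argument is given. Your inductive proof is the standard one and is correct; the key cancellation between the $W_1$-biasing of $\hat{\calL}$ and the normalising factor in the choice of $w_{n+1}$ is exactly the mechanism that makes the spinal decomposition work, and your identification of the increment law of $(V(w_n))$ with the size-biased measure $\mu$ is right and uses \eqref{eqn:boundary} and \eqref{eqn:variance} in precisely the way the proposition requires.

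One small point of care worth tightening in a write-up: when you write $\hat{\E}[G\cdot e^{-x_v}/\sum_j e^{-x_j}] = \E[G\cdot e^{-x_v}]$ for ``a particular child $v$'', you are implicitly using a labelling of the offspring (e.g.\ Ulam--Harris) so that the event $\{w_{n+1}=u\}$ makes sense as picking a fixed label; this is harmless but should be said explicitly, since otherwise the left-hand side is ambiguous.
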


The spinal decomposition is widely used in branching random walk literature. In particular, it implies the so-called many-to-one lemma, introduced for the first time by Kahane and Peyrière \cite{Pey,KaP}. For any $n \in \N$, for any measurable non-negative function $f : \R^{n+1} \to \R$, we have
\begin{align*}
  \E\left( \sum_{|u|=n} f(V(u_j), j \leq n) \right) &= \bar{\E}\left( \frac{1}{W_n} \sum_{|u|=n} f(V(u_j), j \leq n) \right)\\
  &= \hat{\E}\left( \sum_{|u|=n} \frac{e^{-V(u)}}{W_n} e^{V(u)} \sum_{|u|=n} f(V(u_j), j \leq n) \right)\\
  &= \hat{\E}\left( e^{V(w_n)} f(V(w_j), j \leq n) \right).
\end{align*}
In other words, to compute the mean of an additive functional of the branching random walk, it is enough to compute the mean of a similar functional for the sole random walk $(V(w_n), n \geq 0)$, up to an exponential tilting.

\begin{remark}
Note that \eqref{eqn:integrability} can be rewritten, using the spinal decomposition, as
\begin{equation}
  \label{eqn:integrabilitySpine}
  \lim_{x \to \infty} x^2 \hat{\P}\left(\sum_{|v|=1} e^{-V(v)} \geq e^x \right) = 0.
\end{equation}
In other words, this assumption translates into a condition on the tail of the distribution of the progeny of the spine particle.
\end{remark}

\subsection{Small deviations estimate for enriched random walk}
\label{subsec:mogulskii}

The spinal decomposition for the branching random walk allows to simplify branching random walk computations by focusing only on the spine particle. For example, to compute the average number of particles satisfying a given property, it is often enough to consider the probability for the spine to satisfy this property, under the size-biased law. In the rest of the article, we often have to deal with the fact that the progeny of the spine particle is usually correlated with its displacement. As a result, we develop in this section tight estimates on the small deviations for \emph{enriched random walks}, that we use as toy-models for the spine of a branching random walk.

An enriched random walk can be constructed as follows. Let $(X_n,\xi_n)_{n \in \N}$ be a sequence of i.i.d. vectors in $\R^2$ such that 
\begin{equation}
  \label{eqn:integrabilityrw}
  \E(X_n)=0 \quad \text{ and } \quad \E(X_n^2) = \sigma^2 \in (0,\infty),
\end{equation}
We denote by $S_n = S_0 + X_1 + \cdots + X_n$. For any $z \in \R$, $\P_z$ is the probability such that $\P_z(S_0= z)= 1$. We simply write $\P$ for $\P_0$. Setting $\xi_0=0$, an enriched random walk is the process $((S_n, \xi_n), n \in \Z_+)$, that takes values in $\R^2$.

Enriched random walks appear under other names in the literature. For example, the process $(S_{n-1} + \xi_n, n \geq 0)$ is often called a perturbed random walk. This process has been studied in the perpetuity literature (see e.g. \cite{AlI,ArG}).

We study in this section the probability for an enriched random walk to stay during $n$ units of time in an interval of width $o(n^{1/2})$, generalizing the Mogul'ski\u\i{} small deviations estimate \cite{Mog74}.

\begin{theorem}
\label{thm:newSpinalMogulskii}
Let $(a_n) \in \R_+^\N$ such that $\lim_{n \to \infty} a_n = \infty$ and $a_n =o(n^{1/2})$. We assume \eqref{eqn:integrabilityrw} and that $\lim_{x \to \infty} x^2 \P(\xi_1 > x) = \rho \in [0,\infty]$. For any continuous functions $f<g$ such that $f(0)<0<g(0)$, for all $\lambda>0$ and $t > 0$,
\begin{align*}
  &\lim_{n \to \infty} \frac{a_n^2}{n} \log \sup_{z \in [f(0),g(0)]} \P_{za_n}\left( \tfrac{S_j}{a_n} \in [f(j/n),g(j/n)], \tfrac{\xi_j}{a_n} \leq \lambda, j \in [t n] \right)\\
   =  &\lim_{n \to \infty} \frac{a_n^2}{n} \log \P\left(\tfrac{S_j}{a_n} \in [f(j/n),g(j/n)], \tfrac{\xi_j}{a_n} \leq \lambda, j \in [tn] \right)\\
  =  &- \int_0^t  \frac{\pi^2 \sigma^2}{2(g(s)-f(s))^2} ds - \frac{\rho t}{\lambda^2}.
\end{align*}
\end{theorem}

The rest of the section is devoted to the proof of Theorem~\ref{thm:newSpinalMogulskii}, using the same techniques as in \cite[Lemma 2.6]{Mal15b}. The first step relies on the following observation.

\begin{lemma}
\label{lem:cvFonction}
For $n \in \N$ and $t \in \R_+$, we introduce the functions $S^{(n)}_t = \frac{S_{\floor{nt}}}{n^{1/2}}$ and $ P^{(n)}_t = \sum_{j=1}^{\floor{nt}} \ind{\xi_j > n^{1/2}}$. We assume \eqref{eqn:integrabilityrw}, and that there exists an increasing sequence $(n_k) \in \N^\N$ and $\rho \in [0,\infty)$ such that
\begin{equation}
  \label{eqn:sequentialLimitXi}
  \lim_{k \to \infty} n_k \P\left(\xi_1 > (n_k)^{1/2}\right) = \rho.
\end{equation}
Then $\lim_{k \to \infty} (S^{(n_k)},P^{(n_k)}) = (\sigma B,P)$ in law, for the Skorokhod topology on the space of c\`adl\`ag functions, where $B$ is a Brownian motion and $P$ an independent Poisson process with parameter $\rho$.
\end{lemma}

This lemma shows that the events $\{ \tfrac{S_j}{a_n} \in [f(j/n),g(j/n)], j \in [n]\}$ and $\{ \xi_j \leq \lambda a_n, j \in [n]\}$ become asymptotically independent, which is an heuristic explanation for Theorem \ref{thm:newSpinalMogulskii}.

\begin{proof}
To lighten the notation, in this proof, every asymptotic behaviour written as $n \to \infty$ is implicitly considered along the subsequence $(n_k)$ that is given as a hypothesis.

We observe that Lemma \ref{lem:cvFonction} is straightforward if $\rho = 0$. Indeed, $P^{(n)}$ is an increasing function, and for any $t \in \R_+$, we have
\[
  \P(P^{(n)}_t = 0) = \left(1 - \P\left(\xi_1 > n^{1/2}\right)\right)^\floor{nt} \to 1 \quad \text{as }n \to \infty.
\]
Therefore, $\lim_{n \to \infty} P^{(n)} = 0$ in probability. Moreover, $\lim_{n \to \infty} S^{(n)} = \sigma B$ by Donsker's theorem. We conclude by Slutsky's theorem that $(S^{(n)},P^{(n)})$ converges toward $(\sigma B, 0)$ in law, for the Skorokhod topology.

We now assume that $\rho>0$. Let $t \in \R_+$, for any $\lambda,\mu \in \R$, we compute
\begin{align*}
  \E\left( \exp\left(i \lambda S^{(n)}_t + i \mu P^{(n)}_t\right) \right) &= \E\left( \exp\left(i  \tfrac{\lambda}{n^{1/2}}X_1 + i \mu \ind{\xi_1 > n^{1/2}} \right) \right)^\floor{nt} \\
  &= \left(\Phi(\lambda n^{-1/2}) + (e^{i\mu}-1)\P(\xi_1 > n^{1/2}) \Psi_n(\lambda)\right)^{\floor{nt}},
\end{align*}
where $\Phi(s) = \E(e^{i s X_1})$ and $\Psi_n(s) = \E( \exp(i s\tfrac{X_1}{n^{1/2}} )|\xi_1 > n^{1/2})$. Note that
\[
  \E\left( \left( \tfrac{X_1}{n^{1/2}} \right)^2 \middle| \xi_1 > n^{1/2} \right) = \frac{\E\left( X_1^2 \ind{\xi_1>n^{1/2}} \right)}{n\P(\xi_1 > n^{1/2})}.
\]
Thus by dominated convergence, we have
\[
  \lim_{n \to \infty} \E\left( \left( \tfrac{X_1}{n^{1/2}} \right)^2 \middle| \xi_1 > n^{1/2} \right) = \frac{1}{\rho}\lim_{n \to \infty}\E\left( X_1^2 \ind{\xi_1 > n^{1/2}} \right) = 0.
\]
Therefore, $\frac{X_1}{n^{1/2}}$ conditioned to $\{\xi_1 > n^{1/2}\}$ converges to 0 in $\mathrm{L}^2$, hence in law. This yields $\lim_{n \to \infty} \Psi_{n}(\lambda) = 1$ for all $\lambda \in \R$. Moreover, by \eqref{eqn:integrabilityrw} and \eqref{eqn:sequentialLimitXi} respectively, we have $\Phi(\lambda n^{-1/2}) - 1 \sim -\lambda^2\frac{\sigma^2}{2n}$ and $\P(\xi_1 > n^{1/2}) \sim \frac{\rho}{n}$ as $n \to \infty$. As a result, we have
\[
  \lim_{n \to \infty} \E\left( \exp(i \lambda S^{(n)}_t + i \mu P^{(n)}_t) \right) = \exp\left( - t\tfrac{\lambda^2\sigma^2}{2} - t \rho (e^{i\mu}-1) \right)
\]
which proves that $\lim_{n \to \infty} (S^{(n)}_t,P^{(n)}_t) = (\sigma B_t, P_t)$ for all $t>0$.

Using this result and the independent of the increments, we obtain the finite-dimensional distributions of $(S^{(n)},P^{(n)})$ toward $(\sigma B,P)$. By \cite[Theorem 5.1]{Sko57}, the finite-dimensional distribution convergence of a random walk in $\R^2$ implies the convergence in law for the Skorokhod topology of the process toward the associated Lévy process, concluding the proof.
\end{proof}

We use this convergence in law to prove the following result.
\begin{lemma}
\label{lem:ubrw}
Let $(a_n) \in \R_+^\N$ be a sequence such that $\lim_{n \to \infty} a_n = \infty$ and $a_n =o(n^{1/2})$. Under assumption \eqref{eqn:integrabilityrw}, if there exist an increasing sequence $(n_k) \in \N^\N$ and $\rho \in [0,\infty]$ such that $\lim_{k \to \infty} (a_{n_k})^2 \P(\xi_1 > a_{n_k}) = \rho$, then for any continuous functions $f<g$ such that $f(0)< 0 <g(0)$, for all $t >0$,
\begin{multline}
   \lim_{k \to \infty} \frac{a_{n_k}^2}{n_k} \log \P\left(\tfrac{S_j}{a_{n_k}} \in [f(j/n_k),g(j/n_k)], \tfrac{\xi_j}{a_{n_k}} \leq 1, j \in [tn_k] \right)\\
   = - \int_0^t  \frac{\pi^2 \sigma^2}{2(g(s)-f(s))^2} ds - \rho t.
   \label{eqn:ubrw}
\end{multline}
\end{lemma} 

\begin{proof}
Again, to simplify the notation, every asymptotic behaviour as $n \to \infty$ is implicitly understood as along the subsequence $(n_k)$.

First note that if $\rho = \infty$, then $\lim_{n \to \infty} a_{n}^2 \P(\xi_1 > a_{n}) = \infty$. As a result,
\begin{multline*}
  \limsup_{n \to \infty} \frac{a_n^2}{n} \log \P\left( \tfrac{S_j}{a_n} \in [f(j/n),g(j/n)], \tfrac{\xi_j}{a_n} \leq 1, j \in [tn] \right)\\
  \leq \limsup_{n \to \infty} \frac{a_n^2}{n} \log \P(\xi_1 \leq a_n)^\floor{tn} \leq \log \limsup_{n \to \infty} (1-\P(\xi_1 > a_n))^{ta_n^2} = -\infty,
\end{multline*}
concluding the proof in this case.

In the rest of the proof, we assume that $\rho<\infty$. We first prove that \eqref{eqn:ubrw} holds when functions $f$ and $g$ are constant. More precisely, given $t>0$,  $a< 0 <b$ and $c < d$ such that $a \leq c$ and $d \leq b$, and writing $I=[a,b]$, we first prove that
\begin{equation}
  \label{eqn:theAim}
  \liminf_{n \to \infty} \frac{a_n^2}{n} \log \P\left( \tfrac{S_\floor{tn}}{a_n} \in [c,d], \tfrac{S_j}{a_n} \in I, \xi_j \leq a_n, j \in [tn] \right) \geq - \tfrac{t\pi^2 \sigma^2}{2(b-a)^2} - t\rho.
\end{equation}
To prove this result, we will divide the time interval $[0,n]$ into $O(n/a_n^2)$ intervals of width $O(a_n^2)$. On each of these time intervals, the random walk trajectory, properly normalized, converges toward a Brownian motion. We finally use Brownian motion estimates to bound the probability for the random walk to leave the interval $a_n I$ in any of these time intervals.

Let $0<\epsilon < \frac{d-c}{8}$ and $T \in \N$. For any $n \in \N$, we set $r_n = \floor{T a_n^2}$ and denote by $K_n = \floor{\floor{tn}/r_n}-1$. For $k \in [K_n]$, we introduce the times $m_{n,k} = k r_n$ and set $m_{n,K_n+1} = \floor{tn}$. Applying the Markov property at times $m_{n,K_n}, m_{n,K_n - 1}, \ldots m_{n,1}$, and considering only trajectories that are at each time $m_{n,k}$ within distance $\epsilon a_n$ from $0$, we have
\begin{equation}
  \label{eqn:borneInf}
  \P\left( \tfrac{S_n}{a_n} \in [c,d], \tfrac{S_j}{a_n} \in I, \xi_j \leq a_n, j \in [t n] \right) \geq \pi_n^{K_n} \bar{\pi}_n,
\end{equation}
where we have set
\begin{align*}
  \pi_n &= \inf_{|h| \leq \epsilon a_n} \P_h\left( \left|\tfrac{S_{r_n}}{a_n}\right| \leq \epsilon, \tfrac{S_j}{a_n} \in I, \xi_j\leq a_n, j \in [r_n]\right)\\
  \bar{\pi}_n &= \inf_{|h| \leq \epsilon a_n} \P_h\left( \tfrac{S_{\Delta_n}}{a_n} \in [c,d], \tfrac{S_j}{a_n} \in I, \xi_j \leq a_n, j \in [\Delta_n] \right)
\end{align*}
and $\Delta_n = \floor{tn} - K_n r_n$.

We now study the asymptotic behaviour of $\pi_n$ as $n \to \infty$. We introduce the rescaled trajectories, defined for $s \in [0,T]$ by
\[
  S^{(n)}_s =a_n^{-1} S_{\floor{a_n^2 s}} \text{ and } P^{(n)}_s = \textstyle{\sum}_{j=1}^{\floor{a_n^2 s}} \ind{\xi_j > a_n}.
\]
We observe that the probability $\pi_n$ can be rewritten as
\begin{align*}
  \pi_n &= \inf_{|h| \leq \epsilon a_n} \P_h\left( P^{(n)}_T = 0, |S^{(n)}_T| \leq \epsilon, S^{(n)}_s \in I, s \leq T \right)\\
  &= \inf_{|h| \leq \epsilon} \P\left( P^{(n)}_T = 0, |S^{(n)}_{T}+h| \leq \epsilon, S^{(n)}_s + h \in I, s \leq T \right),
\end{align*}
using the fact that the law of $(S^{(n)},P^{(n)})$ under $\P_{ha_n}$ and $(S^{(n)}+h, P^{(n)})$ under~$\P_0$ are the same. Moreover, note that for all $h \in [0,\epsilon]$, we have
\begin{multline*}
  \P\left( P^{(n)}_T = 0, S^{(n)}_{T} \in [-\epsilon - h, \epsilon - h], S^{(n)}_s \in [a-h,b-h], s \leq T \right)\\
  \geq \P\left( P^{(n)}_T = 0, S^{(n)}_T \in [-\epsilon,0], S^{(n)}_s \in [a,b-\epsilon], s \leq T \right),
\end{multline*}
as $[-\epsilon,0] \subset [-\epsilon - h,\epsilon - h]$ and $[a,b-\epsilon] \subset [a-h,b-h]$ for all $h \in [0,\epsilon]$. Similarly, for all $h \in [0,-\epsilon]$, we have
\begin{multline*}
  \P\left( P^{(n)}_T = 0, S^{(n)}_{T} \in [-\epsilon - h, \epsilon - h], S^{(n)}_s \in [a-h,b-h], s \leq T \right)\\
  \geq \P\left( P^{(n)}_T = 0, S^{(n)}_T \in [0,\epsilon], S^{(n)}_s \in [a+\epsilon,b], s \leq T \right).
\end{multline*}
Rewriting the infimum over the interval $[-\epsilon, \epsilon]$ as the minimum of the infimum over the intervals $[-\epsilon,0]$ and $[0,\epsilon]$, we obtain
\[
  \pi_n \geq 
  \min_{\delta \in \{-\epsilon,0\}} \P \left( P^{(n)}_T = 0, S^{(n)}_T+\delta \in (-\epsilon,0), S^{(n)}_s+\delta \in (a, b - \epsilon), s \in [0,T] \right).
\]

By Lemma \ref{lem:cvFonction}, we know that $(S^{(n)},P^{(n)})$ converges toward $(\sigma B,P)$, where $B$ is a Brownian motion and $P$ an independent Poisson process with intensity $\rho$. Therefore, by Portmanteau theorem, we have
\[
  \liminf_{n \to \infty} \pi_n \geq  \min_{\delta \in \{-\epsilon,0\}}  \P\left( \sigma B_T + \delta \in (-\epsilon, 0), P_T = 0, \sigma B_s \in (a,b-\epsilon), s \in [0,T] \right).
\]
Using similar estimates, we obtain that $\liminf_{n \to \infty} \bar{\pi}_n > 0$.

Note that $K_n \sim \frac{tn}{Ta_n^2}$ as $n \to \infty$. Therefore, \eqref{eqn:borneInf} yields
\begin{multline*}
  \liminf_{n \to \infty} \frac{a_n^2}{n} \log \P\left( \tfrac{S_n}{a_n} \in [c,d], \tfrac{S_j}{a_n} \in I, \xi_j \leq a_n, j \in [t n] \right)\\
  \geq \liminf_{n \to \infty} \frac{a_n^2}{n} \left( K_n \log \pi_n + \log \tilde{\pi}_n \right) \geq \frac{t}{T} \liminf_{n \to \infty} \log \pi_n.
\end{multline*}
Thus, using the above estimates and the independence between $P$ and $B$, we obtain that for all $T > 0$,
\begin{align}
  &\liminf_{n \to \infty} \frac{a_n^2}{n} \log \P\left( \tfrac{S_n}{a_n} \in [c,d], \tfrac{S_j}{a_n} \in I, \xi_j \leq a_n, j \in [t n] \right) \nonumber\\
  \geq &- \rho t +  \frac{t}{T} \log \min_{\delta \in \{-\epsilon,0\}}\P\left( \sigma B_T + \delta \in (-\epsilon,0), \sigma B_s + \delta \in (a, b-\epsilon), s \in [0,T] \right). \label{eqn:temp}
\end{align}
Using \cite[Chapter 1.7, Problem 8]{IMK74}, we know that for all $|\delta| \leq \epsilon$, we have
\begin{multline*}
  \lim_{T \to \infty} \frac{1}{T} \log \P\left(\sigma B_T + \delta \in (-\epsilon,0), \sigma B_s + \delta \in (a, b-\epsilon), s \in [0,T] \right)\\ = - \frac{\pi^2 \sigma^2}{2(b-a - \epsilon)^2}.
\end{multline*}
Therefore, letting $T \to \infty$ in \eqref{eqn:temp}, then $\epsilon \to 0$ yields \eqref{eqn:theAim}.

In a second time, we prove a uniform version of \eqref{eqn:theAim}. Let $a<x<x'<b$, we choose $0<\epsilon<\frac{\min\{(d-c),(b-x')\}}{8})$ and set $N = \ceil{\frac{(x'-x)}{\epsilon}}$. We note that
\begin{align*}
  &\inf_{h \in [x,x']} \P_{ha_n}\left( \tfrac{S_n}{a_n} \in [c,d], \tfrac{S_j}{a_n} \in I, \xi_j \leq a_n, j \in [tn] \right)\\
  \geq & \min_{j \in [N]} \inf_{h \in [x+j\epsilon,x+(j+1)\epsilon]} \P\left( \tfrac{S_n}{a_n} + h \in [c,d], \tfrac{S_j}{a_n} + h \in I, \xi_j \leq a_n, j \in [tn] \right)\\
  \geq & \min_{j \in [N]} \P_{(x + j\epsilon)a_n}\left( \tfrac{S_n}{a_n} \in [c,d-\epsilon], \tfrac{S_j}{a_n} \in [a,b-\epsilon], \xi_j \leq a_n, j \in [tn] \right),
\end{align*}
using the same techniques as the ones used to bound $\pi_n$. Thus, applying \eqref{eqn:theAim} to let $n \to \infty$, then letting $\epsilon \to 0$ we obtain
\begin{multline}
  \label{eqn:cvUnif}
  \liminf_{n \to \infty} \frac{a_n^2}{n} \inf_{h \in [x,x']} \log \P_{ha_n}\left( \tfrac{S_n}{a_n} \in [c,d], \tfrac{S_j}{a_n} \in [a,b], \xi_j \leq a_n, j \in [tn] \right)\\ \geq - \frac{t\pi^2 \sigma^2}{2(b-a)^2} - t\rho.
\end{multline}

Finally, using \eqref{eqn:cvUnif}, we can tackle general continuous functions. Let $f<g$ be two continuous functions such that $f(0)<0<g(0)$. We introduce a continuous function $h$ such that $f<h<g$ and $h(0) = 0$. Let $K \in \N$, for $k \in [K]$ we set $I_k = [\tfrac{k-1}{K},\tfrac{k+2}{K}]\cap [0,1]$, as well as
\[
  f_k = \sup_{s \in I_k} f(s), \quad g_k = \inf_{s \in I_k} g(s) \quad \text{and} \quad h_k = h(k/K).
\]
We choose some large $K \in \N$ and some small $\epsilon > 0$. Applying the Markov property at times $k\floor{n/K}$ for $k \in [K]$, we obtain, for all $n \in \N$ large enough
\begin{align*}
  &\P\left(\tfrac{S_j}{a_{n}} \in [f(j/n),g(j/n)], \xi_j \leq a_n , j \in [tn] \right)\\
  \geq & \prod_{k=0}^{\floor{tK}} \inf_{|s-h_k| \leq \epsilon} \P_{sa_n} \left( \left|\tfrac{S_\floor{n/K}}{a_n}-h_{k+1}\right| \leq \epsilon, \tfrac{S_j}{a_n} \in [f_k,g_k], \xi_j \leq a_n, j \in [\floor{n/K}] \right).
\end{align*}
Therefore, using \eqref{eqn:cvUnif}, we obtain
\begin{multline*}
  \liminf_{n \to \infty} \frac{a_n^2}{n} \log \P\left(\tfrac{S_j}{a_{n}} \in [f(j/n),g(j/n)], \xi_j \leq a_n , j \in [tn] \right) \\
  \geq - \sum_{k=0}^\floor{tK} \frac{\pi^2 \sigma^2}{2K(g_k^2 - f_k^2)^2} - \rho \frac{\floor{tK} + 1}{K}.
\end{multline*}
Letting $K \to \infty$, we obtain the lower bound of \eqref{eqn:ubrw}.

The upper bound for this equation is obtained in a very similar way, replacing $\inf$ by $\sup$, $\floor{\cdot}$ by $\ceil{\cdot}$ and $\min$ by $\max$. More precisely, one can prove an analogue of~\eqref{eqn:cvUnif}, by dividing the time interval $[0,n]$ into $O(n/a_n^2)$ intervals of length $O(a_n^2)$, using the Markov property and replacing $\inf_{|h| \leq \epsilon a_n}$ by $\sup_{h \in [aa_n,ba_n]}$. Then, approximating functions $f$ and $g$ by staircase functions and using again the Markov property, the analogue of \eqref{eqn:cvUnif} yields the upper bound of \eqref{eqn:ubrw}.
\end{proof}

Finally, to prove Theorem \ref{thm:newSpinalMogulskii}, we observe that under its assumptions, for any $\lambda > 0$ we have $\lim_{n \to \infty} a_n^2 \P(\xi_1 > \lambda a_n) = \tfrac{\rho}{\lambda^2}$. Using Lemma \ref{lem:ubrw} with the increasing sequence $(n)$, the proof is concluded.

\section{Tail of the consistent maximal displacement}
\label{sec:computation}

We denote by $\emptyset$ the ancestor of the branching random walk. For any $u \in \T$, we write $\pi u$ for the parent of $u$, $\Omega(u)$ for the set of children of~$u$,
\[
  \tilde{\xi}(u) =  \log \left(\sum_{v \in \Omega(u)} e^{V(u)-V(v)}\right) \quad \text{and} \quad \xi(u) = \begin{cases} \tilde{\xi}(\pi u) & \mathrm{if} \quad u \neq \emptyset\\ 0 &\mathrm{if} \quad u = \emptyset.\end{cases}
\]
Note that by \eqref{eqn:boundary}, we have $\P(\tilde{\xi}(\emptyset) > x) \leq e^{-x}$ for any $x \in \R_+$. We set
\begin{equation}
\label{eqn:lambdaDefinition}
  \lambda^* = \left(\frac{ 3\pi^2 \sigma^2 }{2} \right)^{1/3}.
\end{equation}

In this section, we obtain upper and lower bounds for the left tail of $L_n$, ultimately obtaining the following large deviation estimate for the consistent maximal displacement.
\begin{theorem}
\label{thm:largedeviations}
Under assumptions \eqref{eqn:supercritical}, \eqref{eqn:boundary}, \eqref{eqn:variance} and \eqref{eqn:integrability}, we have for all $\lambda \in [0,\lambda^*]$
\begin{equation}
  \label{eqn:largedeviations}
  \lim_{n \to \infty} \frac{1}{n^{1/3}} \log \P(L_n \leq \lambda n^{1/3}) = \lambda- \lambda^*.
\end{equation}
\end{theorem}

In a first time, we prove the upper bound in Theorem \ref{thm:largedeviations}, for which the assumption \eqref{eqn:integrability} is not necessary.
\begin{lemma}
\label{lem:lowerbound}
We assume \eqref{eqn:supercritical}, \eqref{eqn:boundary} and \eqref{eqn:variance}. For any $\lambda \in (0,\lambda^*)$, we have
\[\limsup_{n \to \infty} \frac{1}{n^{1/3}} \log \P\left( L_n \leq \lambda n^{1/3}\right) \leq \lambda - \lambda^*.\]
\end{lemma}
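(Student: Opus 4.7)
The plan is to bound $\P(L_n \leq \lambda n^{1/3})$ by combining the many-to-one identity (Lemma~\ref{lem:manytoone}) with the small-deviation estimate of Theorem~\ref{thm:spinalMogulskii}. By Markov's inequality and Lemma~\ref{lem:manytoone},
\[
\P(L_n \leq \lambda n^{1/3}) \leq \E\!\Bigl[\#\{|u|=n : V(u_k) \leq \lambda n^{1/3},\, k \leq n\}\Bigr] = \E\!\Bigl[e^{S_n}\ind{S_k \leq \lambda n^{1/3},\, k \leq n}\Bigr],
\]
where $(S_k)$ is the centred random walk of variance $\sigma^2$ attached to the branching random walk.

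Since Theorem~\ref{thm:spinalMogulskii} requires a two-sided constraint, I would next introduce a (possibly time-dependent) lower barrier at height $-b(k/n)\,n^{1/3}$ and bound the previous expectation by
\[
\E\!\Bigl[e^{S_n}\ind{-b(k/n)n^{1/3} \leq S_k \leq \lambda n^{1/3},\, k\leq n}\Bigr] + \P\!\Bigl(\exists\, k\leq n,\;\exists\, |v|=k,\; V(v) < -b(k/n)\,n^{1/3}\Bigr).
\]
For the first (main) term, I bound $e^{S_n} \leq e^{\lambda n^{1/3}}$ and apply Theorem~\ref{thm:spinalMogulskii} with $f(s) = -b(s)$ and $g(s) = \lambda$, producing a factor $\exp\!\bigl(-\tfrac{\pi^2\sigma^2}{2}\int_0^1 (\lambda+b(s))^{-2}\,ds\cdot n^{1/3} + o(n^{1/3})\bigr)$. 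For the ``dip'' term, a direct consequence of the many-to-one formula is the Chebyshev-type estimate
\[
\P(\exists |v|=k,\; V(v) \leq -y) \leq \E[e^{S_k}\ind{S_k\leq -y}] \leq e^{-y},
\]
whose union bound over $k \leq n$ contributes at most $\sum_{k\leq n} e^{-b(k/n)n^{1/3}}$.

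It remains to choose $b(\cdot)$ so that both contributions add up to the target $e^{(\lambda - \lambda^*)n^{1/3} + o(n^{1/3})}$, using the relation $(\lambda^*)^3 = 3\pi^2\sigma^2/2$. This balancing is where I expect the technical difficulty to sit: a crude choice $b \equiv \lambda^* - \lambda$ only gives the weaker exponent $\lambda - \lambda^*/3$, so the sharp constant $\lambda^*$ requires a more careful choice, either through a time-dependent barrier whose shape concentrates the Mogul'ski\u i integral optimally, or through a finer decomposition of $\E[e^{S_n}\ind{\cdots}]$ according to the terminal value $S_n$ combined with ballot-type sharpenings of the contribution from trajectories that end close to the upper barrier. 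The main obstacle in carrying out the program is exactly this optimization step; the rest is a routine application of the results of Section~\ref{sec:rw}.
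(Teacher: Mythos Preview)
Your framework is right --- many-to-one plus Mogul'ski\u\i{} --- and you have correctly diagnosed why bounding $e^{S_n}\leq e^{\lambda n^{1/3}}$ can never give better than $\lambda-\lambda^*/3$: once you force the lower barrier to satisfy $b(\cdot)\geq\lambda^*-\lambda$ (so that the dip term is small enough), the tube has width at least $\lambda^*$ everywhere and the integral in Theorem~\ref{thm:spinalMogulskii} is at most $(\lambda^*)^{-2}$, hence the exponent is stuck at $\lambda-\tfrac{\pi^2\sigma^2}{2(\lambda^*)^2}=\lambda-\lambda^*/3$. No choice of $b$ repairs this, and decomposing according to the terminal value $S_n$ still leaves you with a one-sided constraint that you would again have to close with a lower barrier, running into the same obstruction.

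The missing idea is to stop the many-to-one at the \emph{first exit time} through the lower barrier rather than at time $n$. Take $f(t)=\lambda-\lambda^*(1-t)^{1/3}$ and set $I^{(n)}_j=[f(j/n)n^{1/3},\lambda n^{1/3}]$. Since $f(1)=\lambda$, any trajectory with $V(u_j)\leq\lambda n^{1/3}$ for all $j\leq n$ must hit the lower barrier at some first time $k\leq n$; hence
\[
\P(L_n\leq\lambda n^{1/3})\ \leq\ \sum_{k=1}^n \E\Bigl[\sum_{|u|=k}\ind{V(u)\leq f(k/n)n^{1/3},\,V(u_j)\in I^{(n)}_j,\,j<k}\Bigr].
\]
Applying Lemma~\ref{lem:manytoone} at time $k$ (not $n$) gives $e^{S_k}\leq e^{f(k/n)n^{1/3}}$ on the event, so each summand is at most $e^{f(k/n)n^{1/3}}\P(S_j\in I^{(n)}_j,\,j<k)$. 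With this barrier $\lambda-f(s)=\lambda^*(1-s)^{1/3}$, so $\frac{\pi^2\sigma^2}{2}\int_0^t(\lambda-f(s))^{-2}ds=\lambda^*\bigl(1-(1-t)^{1/3}\bigr)$, and Theorem~\ref{thm:spinalMogulskii} makes the $k$th term $\exp\bigl[(f(t)-\lambda^*(1-(1-t)^{1/3}))n^{1/3}+o(n^{1/3})\bigr]=\exp\bigl[(\lambda-\lambda^*)n^{1/3}+o(n^{1/3})\bigr]$, uniformly in $t=k/n$. The point is that the exit-time decomposition replaces the global bound $e^{\lambda n^{1/3}}$ by the much smaller $e^{f(k/n)n^{1/3}}$, and the specific shape of $f$ is exactly what makes the resulting expression constant in $t$. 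This is the optimisation step you were missing; once you have it, the rest is indeed routine.
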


\begin{proof}
Let $\lambda \in (0,\lambda^*)$, we set $f : t \mapsto \lambda -\lambda^*(1-t)^{1/3}$. For $n \in \N$, write
\[
  Y_n = \sum_{|u| \leq n} \ind{V(u) < f(|u|/n)n^{1/3}}\ind{V(u_j) \in [f(j/n)n^{1/3},\lambda n^{1/3}], j \in [|u|-1]}.
\]
As $f(0)<0$ and $f(1) \geq \lambda$, we have by Markov inequality
\begin{equation}
  \label{eqn:dmb}\P(L_n \leq \lambda n^{1/3}) \leq \P(Y_n \geq 1) \leq \E(Y_n).
\end{equation}

Using \cite{Mal15a}, we can compute the asymptotic behaviour of $\E(Y_n)$. More precisely, in that article, the branching random walks that are considered are such that $(\T,-V)$ is in the boundary case. Therefore, \cite[Lemma 3.1]{Mal15a} yields
\begin{align*}
  \limsup_{n \to \infty} n^{-1/3} \log \E(Y_n) &\leq \sup_{t \in [0,1]}\left( f(t) - \int_0^t \frac{\pi^2 \sigma^2}{2 (\lambda - f(s))^2} ds \right)\\
  &\leq \sup_{t \in [0,1]} \left(f(t) - \frac{\pi^2 \sigma^2}{2(\lambda^*)^2} 3\left(1 - (1-t)^{1/3}\right) \right)\\
  &\leq \sup_{t \in [0,1]} \left(f(t) - \lambda^* \left(1 - (1 - t)^{1/3}\right) \right) = \lambda - \lambda^*,
\end{align*}
as $\lambda^* = \frac{3 \pi^2 \sigma^2}{2(\lambda^*)^2}$ by definition \eqref{eqn:lambdaDefinition}.

As a result, we conclude from \eqref{eqn:dmb} that
\[
  \limsup_{n \to \infty} n^{-1/3} \log \E(Y_n) \leq \lambda - \lambda^*.\qedhere
\]
\end{proof}

We now assume that \eqref{eqn:integrability} does not hold, and use Lemma \ref{lem:ubrw} instead of the usual Mogul'skii estimate to improve the upper bound in the asymptotic left tail of $L_n$ along a well-chosen subsequence. In particular, this shows that \eqref{eqn:integrability} is necessary for Theorem \ref{thm:largedeviations} to hold.
\begin{lemma}
\label{lem:ubliminf}
We assume \eqref{eqn:supercritical}, \eqref{eqn:boundary}, \eqref{eqn:variance} and $\limsup_{x \to \infty} x^2 \hat{\P}(\xi_1 > x) > 0$. There exists $\lambda>\lambda^*$ such that 
\[\liminf_{n \to \infty} \frac{1}{n^{1/3}} \log \P\left( L_n \leq \lambda n^{1/3}\right) < 0.\]
\end{lemma}

\begin{proof}
Let $\rho \in (0,\infty]$ and $(x_k) \in \R^\N$ be such that $\lim_{n \to \infty} x_n = \infty$ and
$\lim_{k \to \infty} x_k^2 \hat{\P}(\xi_1 > x_k) = \rho$. We set $R = 3\lambda^*$ and $n_k = \floor{(x_k/R)^3}$. We note that $\liminf_{k \to \infty} (n_k)^{2/3}\hat{\P}(\xi_1 > R(n_k)^{1/3}) \geq \frac{\rho}{R^2}$. Up to modifying $\rho$ and extracting a subsequence from $(n_k)$, we may assume without loss of generality that
\begin{equation}
  \label{eqn:bonneSousSuite}
  \lim_{k \to \infty} (n_k)^{2/3}\hat{\P}(\xi_1 > R (n_k)^{1/3}) = \rho.
\end{equation}

Let $\lambda \in (0,2\lambda^*)$ and $f$ be a continuous increasing function that satisfies $f(0)\in (-\lambda^*,0)$ and $f(1) = \lambda$. For $k \in [n]$, we set $I^{(n)}_k = \left[ f(k/n)n^{1/3}, \lambda n^{1/3} \right]$ and we denote by
\[
  G_n = \left\{ u \in \T : |u| \leq n, V(u_j) \in I^{(n)}_j, \xi(u_j) \leq R n^{1/3}, j \in [|u|] \right\}.
\]
For $k\in [n]$ with $k>0$, we introduce the quantities
\[  X^{(n)}_k = \sum_{|u| = k} \ind{\pi u \in G_n, V(u) < f(k/n)n^{1/3}} \text{ and }  Y^{(n)}_k = \sum_{|u| = k-1} \ind{u \in G_n,\tilde{\xi}(u) > R n^{1/3}}.
\]
We observe that
\begin{align}
  \P\left( L_n \leq \lambda n^{1/3}\right)
  =  &\P\left( \exists |u| = n : \forall j \in [n], V(u_j) \leq \lambda n^{1/3} \right)\nonumber\\
  \leq &\P\Bigg( \sum_{j=1}^n X^{(n)}_j + Y^{(n)}_j \geq 1 \Bigg)
  \leq \sum_{j=1}^n \E\left( X^{(n)}_j + Y^{(n)}_j \right) . \label{eqn:markov}
\end{align}

Using the spinal decomposition we have
\begin{align*}
  \E\left( X^{(n)}_k \right)
  = &\hat{\E} \Bigg( \sum_{|u|=k} \frac{e^{-V(u)}}{W_k} e^{V(u)}\ind{V(u) < f(k/n)n^{1/3}} \ind{\pi u \in G_n}  \Bigg)\\
  = &\hat{\E} \left( e^{V(w_k)}\ind{V(w_k) < f(k/n)n^{1/3}} \ind{w_{k-1} \in G_n} \right)\\
  \leq &e^{f(k/n)n^{1/3}} \hat{\P}\left( w_{k-1} \in G_n \right).
\end{align*}
Similarly, as for any $|u|=k-1$, $\tilde{\xi}(u)$ is independent of $\calF_{k-1}$, and by \eqref{eqn:boundary}, we have $\E(e^{\tilde{\xi}(u)}) = 1$ for all $u \in \T$. Therefore
\begin{align*}
  \E\left( Y^{(n)}_k \right) &= \E\Bigg( \sum_{|u|=k-1} \ind{u \in G_n} \Bigg) \P\left( \tilde{\xi}(u) > R n^{1/3} \right)\\
  &\leq e^{-R n^{1/3}} \hat{\E} \left( e^{V(w_{k-1})}  \ind{w_{k-1} \in G_n} \right)
  \leq e^{(\lambda - R) n^{1/3}} \hat{\P}\left( w_{k-1} \in G_n \right).
\end{align*}
Consequently, as $\lambda - R < -\lambda^* < f(t)$ for any $t \in [0,1]$, \eqref{eqn:markov} becomes
\begin{equation*}
   \P\left( L_n \leq \lambda n^{1/3} \right) \leq 2\sum_{k=1}^n  e^{f(k/n)n^{1/3}} \hat{\P}\left( w_{k-1} \in G_n \right).
\end{equation*}

Let $A \in \N$, for any $a \in [A]$, we write $m_a = \floor{na/A}$. As $f$ is increasing, for any $a \in [A-1]$ and $k \in (m_a,m_{a+1}] \cap \N$, we have
\[
  e^{ f(k/n)n^{1/3}} \hat{\P}\left( w_{k-1} \in G_n \right) \leq e^{ f((a+1)/A)n^{1/3}} \hat{\P}\left( w_{m_a} \in G_n \right).
\]
Moreover, by the spinal decomposition, $(V(w_j),\xi(w_j), j \in \Z_+)$ is an enriched random walk under law $\hat{\P}$. By \eqref{eqn:bonneSousSuite}, we use Lemma \ref{lem:ubrw} to obtain for any $a \in [A-1]$ with $a \neq 0$,
\[
  \lim_{k \to \infty} \frac{1}{(n_k)^{1/3}} \log \hat{\P} \left( w_{m_a} \in G_{n_k} \right) = - \int_0^{a/A} \frac{\pi^2 \sigma^2}{2(\lambda - f(s))^2} ds - \frac{\rho a}{A}.
\]
This limit also trivially holds for $a=0$ (with the convention $0.\infty=0$). Therefore, letting $n \to \infty$ along the subsequence $(n_k)$, \eqref{eqn:markov} yields
\begin{multline*}
  \liminf_{n \to \infty} \frac{1}{n^{1/3}} \log \P\left( L_n \leq \lambda n^{1/3} \right)\\
   \leq \liminf_{n \to \infty} \frac{1}{n^{1/3}} \log \left(\ceil{\frac{n}{A}} \sum_{a=0}^{A-1} e^{ f((a+1)/A)n^{1/3}} \hat{\P}\left( w_{m_a} \in G_n \right) \right)\\
   \leq \max_{a \in [A]} \left( f((a+1)/A) - \int_0^{a/A} \frac{\pi^2 \sigma^2}{2(\lambda - f(s))^2} ds - \frac{\rho a}{A} \right).
\end{multline*}
Then, letting $A \to \infty$, we obtain
\begin{equation}
  \label{eqn:unelim}
  \liminf_{n \to \infty} \frac{1}{n^{1/3}} \log \P\left( L_n \leq \lambda n^{1/3} \right) \leq \sup_{t \in [0,1]}\left( f(t) - \int_0^t \frac{\pi^2 \sigma^2}{2(\lambda - f(s))^2} ds - \rho t\right).
\end{equation}

Note that if $\rho=\infty$, we can choose $f : t \mapsto - \lambda^*/2 + 2 \lambda^* t$ and $\lambda = 3 \lambda^*/2$. In that case, \eqref{eqn:unelim} allows to conclude the proof.

Thus, in the rest of the proof, we assume that $\rho<\infty$. For all $\lambda \geq \lambda^*$ and $\mu \geq 0$, we denote by $f_{\lambda,\mu}$ the solution of
\[
  \forall t \in [0,1), \  y'(t) = \tfrac{\pi^2 \sigma^2}{2}(\lambda - y(t))^{-2} + \mu,  \quad \text{with } y(1) = \lambda
\]
Note that $f_{\lambda,\mu}$ is continuous with respect to $(\lambda,\mu)$, decreasing in $\mu$, and that
\[
  \forall t \in [0,1], f_{\lambda,0}(t) =\lambda - \lambda^*(1-t)^{1/3}
\]
Thus for a given $\rho>0$, there exists $\lambda > \lambda^*$ close enough to $\lambda^*$ such that  $f_{\lambda,\rho}(0)<0$. Applying \eqref{eqn:unelim} with this choice of $\lambda$ and $f=f_{\lambda,\rho}$ yields
\[
  \liminf_{n \to \infty} \frac{1}{n^{1/3}} \log \P\left( L_n \leq \lambda n^{1/3} \right) \leq f_{\lambda,\rho}(0)<0,
\]
which concludes the proof.
\end{proof}

The two previous lemmas allow to bound from below the consistent maximal displacement, showing in particular that with high probability $L_n \geq (\lambda^*-\epsilon)n^{1/3}$ for all $\epsilon>0$ and $n$ large enough. To bound $L_n$ from above, we prove that with high probability there exists an individual staying below $\lambda n^{1/3}$ for $n$ units of time, as soon as $\lambda$ is large enough, using a second moment computation.
\begin{lemma}
\label{lem:upperbound}
We assume \eqref{eqn:supercritical}, \eqref{eqn:boundary}, \eqref{eqn:variance} and \eqref{eqn:integrability}. For any $\lambda \in (0, \lambda^*)$, we have
\[\liminf_{n \to \infty} \frac{1}{n^{1/3}} \log \P\left( L_n \leq \lambda n^{1/3} \right) \geq \lambda - \lambda^*.\]
\end{lemma}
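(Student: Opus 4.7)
The plan is a Paley--Zygmund second moment argument on a truncated count of ``good'' particles. Fix $\epsilon>0$ small and a continuous profile $f:[0,1]\to\R$ with $f(0)<0$ and $f(1)=\lambda$, tuned so that
\[
  \frac{\pi^2\sigma^2}{2}\int_0^1\frac{ds}{(\lambda-f(s))^2}\;\le\;\lambda^*+\epsilon.
\]
The natural choice $f(s)=\lambda-\lambda^*(1-s)^{1/3}$ (truncated below some large negative constant) mirrors the profile of Lemma~\ref{lem:lowerbound} and realises equality. I let $Z_n$ be the number of individuals $u$ with $|u|=n$ such that (a) $V(u_j)\in[f(j/n)n^{1/3},\lambda n^{1/3}]$ for every $j\le n$, (b) $V(u)\in[\lambda n^{1/3}-1,\lambda n^{1/3}]$, and (c) $\log\bigl(\sum_{v\text{ child of }u_j}e^{V(u_j)-V(v)}\bigr)\le n$ for every $j<n$. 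Condition (c) is the ``not too many children'' truncation mentioned in the preamble and implements the $\xi_j\le n$ constraint of Theorem~\ref{thm:spinalMogulskii}. Since $\{Z_n\ge 1\}\subset\{L_n\le\lambda n^{1/3}\}$, by Paley--Zygmund it is enough to prove $(\E Z_n)^2/\E(Z_n^2)\ge\exp\{(\lambda-\lambda^*-C\epsilon)n^{1/3}\}$ for $n$ large.

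First I would compute $\E Z_n$. Lemma~\ref{lem:manytoone}, in the enriched form coming from the spinal decomposition, rewrites $\E Z_n=\E\bigl[e^{S_n}\ind{(S_j,\xi_j)_{j\le n}\text{ meets (a),(b),(c)}}\bigr]$, where $(S_j,\xi_j)_{j\ge 0}$ is the enriched random walk whose step law is that of $(V(w_j)-V(w_{j-1}),\log W_1(w_{j-1}))$ under $\hat\P$. Pinning $S_n\in[\lambda n^{1/3}-1,\lambda n^{1/3}]$ extracts a factor $e^{\lambda n^{1/3}(1+o(1))}$, and the remaining small-deviation probability is bounded below by $e^{-(\lambda^*+\epsilon)n^{1/3}(1+o(1))}$ via Theorem~\ref{thm:spinalMogulskii} with $a_n=n^{1/3}$. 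The required hypothesis $\limsup_n a_n^2\P(\xi_1\ge n)=0$ is exactly what \eqref{eqn:integrability} supplies, since by spinal decomposition
\[
  \P(\xi_1\ge x)\;=\;\E\Bigl[\sum_{|u|=1}e^{-V(u)}\ind{\log\sum_{|v|=1}e^{-V(v)}\ge x}\Bigr]\;=\;o(x^{-2}).
\]
This gives $\E Z_n\ge\exp\{(\lambda-\lambda^*-C\epsilon)n^{1/3}\}$ for $n$ large.

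Next I would bound $\E(Z_n^2)$ by decomposing along the generation $k$ of the most recent common ancestor $w$ of a pair of counted particles. The contribution from $k=n$ is $\E Z_n$; for $k<n$, conditioning on $\calF_k$ and on the offspring of $w$, it factors into a sum over pairs of distinct children of $w$ of the product of expected numbers of good descendants along each branch. Applying the first-moment estimate with a tube shifted by $V(w)$ to each of the two independent subtrees, and using condition (c) to control $\log W_1(w)$, the sum over $k$ is at most $\E Z_n\cdot e^{C\epsilon n^{1/3}}$ up to polynomial factors. Paley--Zygmund then yields $\P(Z_n\ge 1)\ge e^{(\lambda-\lambda^*-C\epsilon)n^{1/3}}$, and sending $\epsilon\to 0$ concludes.

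The hard part will be the second moment. An ancestor $w$ whose offspring point process has anomalously large $W_1$ could generate disproportionately many pairs of in-tube descendants and thus dominate $\E(Z_n^2)$; it is precisely \eqref{eqn:integrability} that rules this out, simultaneously keeping the truncation (c) cheap at the level of the first moment and controlling the large-$W_1$ contribution to the second. Lemma~\ref{lem:spinalMogulskiiModified} shows that if \eqref{eqn:integrability} fails, the Mogul'ski\u\i{} rate strictly deteriorates below $-\lambda^*$, which confirms that the threshold $n$ in (c) is essentially sharp and that no weaker tail hypothesis would suffice for this strategy.
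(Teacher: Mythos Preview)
Your overall strategy --- Paley--Zygmund on a truncated count with the profile $f(s)=\lambda-\lambda^*(1-s)^{1/3}$ --- is exactly the paper's, but there is a genuine gap in the choice of truncation threshold. Your condition (c) cuts at $\xi_j\le n$, the literal constraint appearing in Theorem~\ref{thm:spinalMogulskii}; this is indeed enough for the \emph{first} moment, since under $\hat\P$ one only needs $n^{2/3}\hat\P(\xi_1\ge n)\to 0$ to invoke the Mogul'ski\u\i{} estimate. But in the second moment the truncation enters \emph{multiplicatively}: after the spine (or MRCA) decomposition, summing the expected number of in-tube descendants over the children of $w_k$ produces
\[
  \sum_{v\in\Omega(w_k)}e^{-V(v)}=e^{-V(w_k)+\xi(w_k)},
\]
and the factor $e^{\xi(w_k)}$ is controlled only by (c). With your threshold this gives $e^{\xi(w_k)}\le e^{n}$, not $e^{C\epsilon n^{1/3}}$ as you assert, and the bound on $\E(Z_n^2)$ is destroyed. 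The fix, which is what the paper does, is to truncate at $\xi_j\le\delta n^{1/3}$ for a small $\delta>0$: assumption~\eqref{eqn:integrability} with $x=\delta n^{1/3}$ still yields $n^{2/3}\hat\P(\xi_1\ge\delta n^{1/3})\to 0$, so the first-moment cost remains negligible, while the second-moment factor becomes $e^{\delta n^{1/3}}$, which is absorbed when $\delta\to 0$.

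A secondary issue: pinning $V(u)\in[\lambda n^{1/3}-1,\lambda n^{1/3}]$ is a window of width $1$, not of order $a_n=n^{1/3}$, so Theorem~\ref{thm:spinalMogulskii} as stated does not furnish a matching lower bound on that event. The paper sidesteps this by offsetting the profile to $f(t)=\lambda-\lambda^*(1+\delta-t)^{1/3}$, so that $f(1)=\lambda-\lambda^*\delta^{1/3}<\lambda$ and the terminal tube has width of order $n^{1/3}$; the crude bound $e^{V(w_n)}\ge e^{f(1)n^{1/3}}$ then replaces your pinning, at a cost that vanishes with $\delta$. The same $\delta$-offset also keeps $f<g$ uniformly on $[0,1]$, which your profile (with $f(1)=\lambda=g(1)$) does not.
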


\begin{proof}
Let $\lambda \in (0,\lambda^*)$, $\delta > 0$, and $f : t \in [0,1] \mapsto \lambda - \lambda^*(1+\delta-t)^{1/3}$. We denote by $I^{(n)}_j = \left[ f(j/n)n^{1/3}, \lambda n^{1/3} \right]$ for all $j \in [n]$. We set
\[
  Z_n = \sum_{|u|=n} \ind{V(u_j) \in I^{(n)}_j, \xi(u_j) \leq \delta n^{1/3}, j \in [n]}.
\]
We compute the first two moments of $Z_n$ to bound from below $\P(Z_n>0)$.

Using the spinal decomposition, we have
\begin{align*}
  \E(Z_n)
  &= \hat{\E}\left( e^{V(w_n)} \ind{V(w_j) \in I^{(n)}_j, \xi(w_{j}) \leq \delta n^{1/3}, j \in [n]}\right)\\
  &\geq e^{f(1)n^{1/3}} \hat{\P}\left( V(w_j) \in I^{(n)}_j, \xi(w_{j}) \leq \delta n^{1/3}, j \in [n] \right).
\end{align*}
As $\displaystyle \lim_{n \to \infty} n^{2/3} \hat{\P}( \xi(w_1) > \delta n^{1/3} ) = 0$ by \eqref{eqn:integrability}, Theorem \ref{thm:newSpinalMogulskii} yields
\begin{equation}
  \label{eqn:firstmoment}
  \liminf_{n \to \infty} \frac{1}{n^{1/3}} \log \E(Z_n) \geq f(1) - \frac{\pi^2 \sigma^2}{2}\int_0^1 \frac{ds}{(\lambda-f(s))^2}
  \geq \lambda  - \lambda^*(1 + \delta)^{1/3}. 
\end{equation}

Similarly, to compute the second moment we observe that
\begin{align*}
  \E(Z_n^2) &= \hat{\E}\Bigg( Z_n\sum_{|u|=n} \frac{e^{-V(u)}}{W_n} e^{V(u)} \ind{V(u_j) \in I^{(n)}_j,\xi(u_{j}) \leq \delta n^{1/3}, j \in [n]} \Bigg)\\
  &= \hat{\E}\left(Z_n e^{V(w_n)} \ind{V(w_j) \in I^{(n)}_j, \xi(w_{j}) \leq \delta n^{1/3}, j \in [n]}\right)\\
  &\leq e^{\lambda n^{1/3}} \hat{\E}\left(Z_n \ind{V(w_j) \in I^{(n)}_j, \xi(w_{j}) \leq \delta n^{1/3}, j \in [n]}\right).
\end{align*}
Under the law $\hat{\P}$, $Z_n$ can be decomposed as follows
\[
  Z_n = \ind{V(w_j) \in I^{(n)}_j, \xi(w_{j}) \leq \delta n^{1/3}, j \in [n]} + \sum_{k=0}^{n-1} \sum_{u \in \Omega(w_k), u \neq w_{k+1}} Z_n(u),
\]
where $Z_n(u) = \sum_{|v| =n, v > u} \ind{V(u_j) \in I^{(n)}_j,\xi(u_{j}) \leq \delta n^{1/3}, j \in [n]}$. We denote by
\[
  \calG = \sigma\left( w_n, \Omega(w_n), V(u), u \in \Omega(w_n), n \in \N \right).
\]

Observe that conditionally on $\calG$, for any $u \in \Omega(w_k)$ such that $u \neq w_{k+1}$, the subtree of the descendants of $u$ has the law of a branching random walk starting from $V(u)$. Therefore, writing $\P_x$ for the law of $(\T,V+x)$, for any $k\in [n-1]$ and $u \in \Omega(w_k)$ such that $u \neq w_{k+1}$, we have
\begin{align*}
  \hat{\E}\left( Z_n(u) | \calG \right) &\leq \E_{V(u)}\Bigg( \sum_{|v| = n-k-1} \ind{V(v_j) \in I^{(n)}_{k+j+1},\xi(v_{j}) \leq \delta n^{1/3}, j \in [n-k-1]} \Bigg)\\
  &\leq \E_{V(u)} \Bigg(\sum_{|v| = n-k-1} \ind{V(v_j) \in I^{(n)}_{k+j+1}, j \in [n-k-1]} \Bigg).
\end{align*}
Applying spinal decomposition, for any $x \in \R$ and $p \in [n]$,
\begin{multline*}
  \E_{x} \Bigg(\sum_{|v| = n-p} \ind{V(v_j) \in I^{(n)}_{p+j}, j \in [n-p]} \Bigg) = \hat{\E}\Bigg( e^{V(w_{n-p})} \ind{V(w_j)+x \in I^{(n)}_{p+j}, j \in [n-p]} \Bigg)\\
  \leq e^{\lambda n^{1/3}-x} \hat{\P}\left( V(w_j) + x \in I^{(n)}_{p+j}, j \in [n-p] \right).
\end{multline*}
Let $A \in \N$, for any $a \in [A]$ we set $m_a = \floor{na/A}$ and
\[
  \Psi^{(n)}_{a,A} = \sup_{y \in I^{(n)}_{m_{a}}} \hat{\P}\left( V(w_j) + y \in I^{(n)}_{m_{a}+j}, j \in [n-m_{a}] \right).
\]
Using the previous equation, for any $a \in [A-1]$ and $k \in [n-1]$ such that $m_a \leq k < m_{a+1}$, we have
\begin{align*}
  \sum_{\substack{u \in \Omega(w_k)\\ u \neq w_{k+1}}} \hat{\E}\left( Z_n(u) | \calG \right)
  &\leq e^{\lambda n^{1/3}} \Psi^{(n)}_{a+1,A} \sum_{u \in \Omega(w_k)} e^{-V(u)}\\
  &\leq e^{\lambda n^{1/3} - V(w_k) + \xi(w_{k+1})} \Psi^{(n)}_{a+1,A}.
\end{align*}
As $V(w_{k}) \geq f(m_a/n) n^{1/3}$ (using the fact that $f$ is increasing), we obtain
\begin{multline*}
  \E(Z_n^2)
  \leq e^{\lambda n^{1/3}} \hat{\P}\left( V(w_j) \in I^{(n)}_j, j \in [n] \right)\\
  + e^{(2 \lambda + \delta) n^{1/3}} \sum_{a=0}^{A-1} n \Psi^{(n)}_{a+1,A} e^{- f(m_a/n)n^{1/3}} \hat{\P}\left( V(w_j) \in I^{(n)}_j, j \in [n] \right).
\end{multline*}
Therefore, applying Theorem \ref{thm:newSpinalMogulskii} (and the fact that $\Psi^{(n)}_{A,A}=1$), we have
\begin{multline*}
  \limsup_{n \to \infty} \frac{1}{n^{1/3}} \log \E(Z_n^2) \leq 2 \lambda + \delta - \frac{\pi^2 \sigma^2}{2} \int_0^1 \frac{ds}{(\lambda - f(s))^2}\\
  + \max_{a \in [A-1]} \left(- f(a/A) - \frac{\pi^2 \sigma^2}{2} \int_{(a+1)/A}^1 \frac{ds}{(\lambda - f(s))^2} \right).
\end{multline*}
Letting $A \to \infty$, we obtain
\begin{equation}
  \label{eqn:secondmoment}
  \limsup_{n \to \infty} \frac{1}{n^{1/3}} \log \E(Z_n^2) \leq \lambda - \lambda^*(1 + \delta)^{1/3} +\delta + 2 \lambda^*\delta^{1/3}.
\end{equation}

By Cauchy-Schwarz inequality, we have
\[\P\left( L_n \leq \lambda n^{1/3} \right) \geq \P\left( Z_n > 0 \right) \geq \frac{\E(Z_n)^2}{\E(Z_n^2)}.\]
Thus, using \eqref{eqn:firstmoment} and \eqref{eqn:secondmoment}, we obtain
\[
  \liminf_{n \to \infty} \frac{1}{n^{1/3}} \log \P\left( L_n \leq \lambda n^{1/3} \right) \geq \lambda - \lambda^*(1 + \delta)^{1/3} - \delta - 2 \lambda^*\delta^{1/3}.
\]
Letting $\delta \to 0$ allows to conclude.
\end{proof}

\begin{proof}[Proof of Theorem \ref{thm:largedeviations}]
Using Lemmas \ref{lem:lowerbound} and \ref{lem:lowerbound}, we observe that assuming that \eqref{eqn:integrability}, we have
\[
  \lim_{n \to \infty} \frac{1}{n^{1/3}} \log \P(L_n \leq \lambda n^{1/3}) = \lambda - \lambda^*,
\]
which concludes the proof.
\end{proof}

\begin{proof}[Proof of Theorem \ref{thm:main}]
We now extend Theorem \ref{thm:largedeviations} to obtain the almost sure asymptotic behaviour of $L_n$. We first assume that \eqref{eqn:integrability} does not hold. By Lemma~\ref{lem:ubliminf}, there exists  $\lambda > \lambda^*$ and an increasing sequence $(n_k)\in \N^\N$ such that
\[\sum_{k =1}^{\infty} \P( L_{n_k} \leq n_k^{1/3}\lambda) < \infty.\]
Therefore, by Borel-Cantelli lemma, we have $\liminf_{k \to \infty} \frac{L_{n_k}}{{n_k}^{1/3}} \geq \lambda$ a.s. Therefore, we conclude that
\[
  \limsup_{n \to \infty} \frac{L_n}{n^{1/3}} > \lambda^* \quad \text{a.s,}
\]
proving that \eqref{eqn:integrability} is necessary for the convergence in Theorem~\ref{thm:main} to hold.

We assume in the rest of the proof that \eqref{eqn:integrability} holds. By Lemma~\ref{lem:lowerbound}, for any $\lambda < \lambda^*$, we have $\sum_{n =1}^{\infty} \P( L_{n} \leq n^{1/3}\lambda) < \infty$. As a result, we have
\[
  \liminf_{n \to \infty} \frac{L_n}{n^{1/3}} \geq \lambda^* \text{ a.s.}
\]
by taking $\lambda \to \lambda^*$.

We now bound $L_n$ from above. By Lemma \ref{lem:upperbound}, for any $\delta > 0$, we have
\[
  \liminf_{n \to \infty} \frac{1}{n^{1/3}} \log \P(L_n \leq \lambda^* n^{1/3}) > -\delta.
\]
We work in the rest of the proof conditionally on the survival event $S$. We write $\hat{\T}$ for the subtree of $\T$ consisting of individuals having an infinite line of descent. By \cite[Chapter 1, Theorem 12.1]{AtNbook}, $\hat{\T}$ is a supercritical Galton-Watson process that never dies out. Applying \cite[Lemma 2.4]{Mal15a}\footnote{Note the integrability hypothesis \eqref{eqn:integrability} is weaker than \cite[Assumption (1.3)]{Mal15a}, but is enough to make the proof of \cite[Lemma 2.4]{Mal15a} hold.} to the branching random walk $(\hat{\T},V)$, there exists $a>0$ and $\rho>1$ such that the event
\[
  \mathcal{A}(p) = \left\{ \# \left\{ |u| = p : \forall j \leq p, V(u_j) \leq pa  \right\} \geq \rho^p \right\}
\]
is verified a.s. for $p\in \N$ large enough. Let $\eta>0$, we set $p=\floor{\eta n^{1/3}}$. Applying the Markov property at time $p$, we have
\[
  \P\left( L_{n+p} \geq (\lambda^* + a \eta)n^{1/3} | \mathcal{A}(p) \right) \leq \left( 1 - \P\left( L_{n} \leq \lambda^* n^{1/3} \right) \right)^{\rho^p}.
\]
Using the Borel-Cantelli lemma, we conclude that $\limsup_{n \to \infty} \frac{L_n}{n^{1/3}} \leq \lambda^* + a\eta$ a.s. on $S$.
We let $\eta \to 0$ to conclude the proof.
\end{proof}

\bibliographystyle{plain}

\begin{thebibliography}{10}

\bibitem{ABR09}
L.~Addario-Berry and B.~Reed.
\newblock Minima in branching random walks.
\newblock {\em Ann. Probab.}, 37(3):1044--1079, 2009.

\bibitem{Aid13}
Elie A{\"{\i}}d{\'e}kon.
\newblock Convergence in law of the minimum of a branching random walk.
\newblock {\em Ann. Probab.}, 41(3A):1362--1426, 2013.

\bibitem{AlI}
G. Alsmeyer and A Iksanov.
\newblock A Log-Type Moment Result for Perpetuities and Its Application to Martingales in Supercritical Branching Random Walks.
\newblock {\em Electron. J. Probab.}, 10, 289--312, 2009.

\bibitem{ArG}
V. F. Araman and P. W. Glynn.
\newblock Tail asymptotics for the maximum of a perturbed random walk.
\newblock {\em Ann. Applied Probab.}, 16(3), 1411--1431, 2006.


\bibitem{AtNbook}
K.~B. Athreya and P.~E. Ney.
\newblock {\em Branching processes}.
\newblock Dover Publications, Inc., Mineola, NY, 2004.
\newblock Reprint of the 1972 original [Springer, New York; MR0373040].

\bibitem{BeG11}
J.~B{\'e}rard and J.-B. Gou{\'e}r{\'e}.
\newblock Survival probability of the branching random walk killed below a
  linear boundary.
\newblock {\em Electron. J. Probab.}, 16:no. 14, 396--418, 2011.

\bibitem{Big76}
J.~D. Biggins.
\newblock The first- and last-birth problems for a multitype age-dependent
  branching process.
\newblock {\em Advances in Appl. Probability}, 8(3):446--459, 1976.

\bibitem{BiK05}
J.~D. Biggins and A.~E. Kyprianou.
\newblock Fixed points of the smoothing transform: the boundary case.
\newblock {\em Electron. J. Probab.}, 10:no. 17, 609--631, 2005.

\bibitem{Che14}
X.~Chen.
\newblock A necessary and sufficient condition for the non-trivial limit of the
  derivative martingale in a branching random walk.
\newblock {\em Adv. in Appl. Probab.}, 47:no. 3, 741--760, 2015.

\bibitem{Che14b}
X.~Chen.
\newblock Scaling Limit of the Path Leading to the Leftmost Particle in a Branching Random Walk.
\newblock {\em Theory Probab. Appl.}, 59(4):567--589, 2015.

\bibitem{FaZ10}
M.~Fang and O.~Zeitouni.
\newblock Consistent minimal displacement of branching random walks.
\newblock {\em Electron. Commun. Probab.}, 15:106--118, 2010.

\bibitem{FHS12}
G.~Faraud, Y.~Hu, and Z.~Shi.
\newblock Almost sure convergence for stochastically biased random walks on
  trees.
\newblock {\em Probab. Theory Related Fields}, 154(3-4):621--660, 2012.

\bibitem{HuS09}
Y.~Hu and Z.~Shi.
\newblock Minimal position and critical martingale convergence in branching
  random walks, and directed polymers on disordered trees.
\newblock {\em Ann. Probab.}, 37(2):742--789, 2009.
\bibitem{IMK74}

K. It{\^o} and H.~P. McKean, Jr.
\newblock {\em Diffusion processes and their sample paths}.
\newblock Springer-Verlag, Berlin-New York, 1974.
\newblock Second printing, corrected, Die Grundlehren der mathematischen
  Wissenschaften, Band 125.

\bibitem{KaP}
J.-P. Kahane and J.~Peyri{\`e}re.
\newblock Sur certaines martingales de {B}enoit {M}andelbrot.
\newblock {\em Advances in Math.}, 22(2):131--145, 1976.

\bibitem{Lyo97}
R.~Lyons.
\newblock A simple path to {B}iggins' martingale convergence for branching
  random walk.
\newblock In {\em Classical and modern branching processes ({M}inneapolis,
  {MN}, 1994)}, volume~84 of {\em IMA Vol. Math. Appl.}, pages 217--221.
  Springer, New York, 1997.

\bibitem{LPP95}
R.~Lyons, R.~Pemantle, and Y.~Peres.
\newblock Conceptual proofs of {$L\log L$} criteria for mean behavior of
  branching processes.
\newblock {\em Ann. Probab.}, 23(3):1125--1138, 1995.

\bibitem{Mal15a}
B.~Mallein.
\newblock Branching random walk with selection at critical rate.
\newblock {\em arXiv preprint}, 2015.
\newblock arXiv:1502.07390.

\bibitem{Mal15b}
B.~Mallein.
\newblock {$N$}-branching random walk with $\alpha$-stable spine.
\newblock {\em arXiv preprint}, 2015.
\newblock arXiv:1503.03762.

\bibitem{Mog74}
A.~A. Mogul{\cprime}ski{\u\i}.
\newblock Small deviations in the space of trajectories.
\newblock {\em Teor. Verojatnost. i Primenen.}, 19:755--765, 1974.

\bibitem{Pey}
Jacques Peyri{\`e}re.
\newblock Turbulence et dimension de {H}ausdorff.
\newblock {\em C. R. Acad. Sci. Paris S\'er. A}, 278:567--569, 1974.

\bibitem{Rob14}
M.~I. Roberts.
\newblock Fine asymptotics for the consistent maximal displacement of branching
  {B}rownian motion.
\newblock {\em Electron. J. Probab.}, 20:no. 28, 26, 2015.

\bibitem{Sko57}
A.~V. Skorohod.
\newblock Limit theorems for stochastic processes with independent increments.
\newblock {\em Teor. Veroyatnost. i Primenen.}, 2:145--177, 1957.

\end{thebibliography}

\def\cprime{$'$}

\end{document}